\newtheorem{thm}{Theorem}
\newtheorem{claim}{Claim}
\newtheorem{cor}{Corollary}
\newenvironment {proof} {\noindent{\em Proof.}}{\hspace*{\fill}$\Box$\par\vspace{4mm}}
\def\qed{\hfill \nopagebreak\rule{5pt}{8pt}}
\title{\bf Hypergraph Tur\'an numbers of
vertex disjoint cycles\footnote{Supported by NSFC and the ``973" program. } }
\author{
\small  Ran Gu, Xueliang Li, Yongtang Shi\\
\small Center for Combinatorics and LPMC-TJKLC \\
\small Nankai University, Tianjin 300071, China \\
\small guran323@163.com, lxl@nankai.edu.cn,  shi@nankai.edu.cn
\date{}}
\begin{document}
\maketitle
\begin{abstract}
The Tur\'an number of a $k$-uniform hypergraph $H$, denoted by
$e{x_k}\left( {n;H} \right)$, is the maximum number of edges in any
$k$-uniform hypergraph $F$ on $n$ vertices which does not contain
$H$ as a subgraph. Let $\mathcal{C}_{\ell}^{\left( k \right)}$
denote the family of all $k$-uniform minimal cycles of length
$\ell$, $\mathcal{S}(\ell_1,\ldots,\ell_r)$ denote the family of
hypergraphs consisting of unions of $r$ vertex disjoint minimal
cycles of length $\ell_1,\ldots,\ell_r$, respectively, and
$\mathbb{C}_{\ell}^{\left( k \right)}$ denote a $k$-uniform linear
cycle of length $\ell$. We determine precisely $e{x_k}\left(
{n;\mathcal{S}(\ell_1,\ldots,\ell_r)} \right)$ and $e{x_k}\left(
{n;\mathbb{C}_{{\ell_1}}^{\left( k \right)}, \ldots,
\mathbb{C}_{{\ell_r}}^{\left( k \right)}} \right)$ for sufficiently
large $n$. The results extend recent results of F\"{u}redi and Jiang
who determined the Tur\'an numbers for single $k$-uniform minimal
cycles and linear cycles.
\\[2mm]
\textbf{Keywords:}  Tur\'an number; cycles; extremal hypergraphs\\
[2mm] \textbf{AMS Subject Classification (2010):} 05D05, 05C35, 05C65
\end{abstract}

\section{Introduction}
In this paper, we employ standard definitions and notation
from hypergraph theory  (see e.g.,\cite{Ber}).
A \emph{hypergraph} is a pair $H = (V,E)$ consisting of a set $V$
 of vertices and a set $E \subseteq \mathcal{P}(V)$ of edges.
If every edge contains exactly $k$ vertices, then $H$ is a {\it
$k$-uniform hypergraph}. For two hypergraphs $G$ and $H$, we write
$G\subseteq H$ if there is an injective homomorphism from $G$ into
$H$. We use $G \cup  H$ to denote the disjoint union of
(hyper)graphs $G$ and $H$. By disjoint, we will always mean vertex
disjoint. A {\it Berge path} of length $\ell$ is a family of
distinct sets $\{F_1, \ldots, F_{\ell}\}$ and $\ell + 1$ distinct
vertices $v_1, \ldots, v_{\ell+1}$ such that for each $1\leq
i\leq\ell$, $F_i$ contains $v_i$ and $v_{i+1}$. Let
$\mathcal{B}_\ell^{(k)}$ denote the family of $k$-uniform Berge
paths of length $\ell$. A {\it linear path} of length $\ell$ is a
family of sets $\{F_1, \ldots, F_{\ell}\}$ such that $|F_i\cap
F_{i+1}| = 1$ for each $i$ and $F_i\cap F_j =\emptyset$ whenever $|i
-j|> 1$. Let $\mathbb{P}_{\ell}^{(k)}$ denote the $k$-uniform linear
path of length $\ell$. It is unique up to isomorphisms. A
$k$-uniform \emph{Berge cycle} of length $\ell$ is a cyclic list of
distinct $k$-sets $A_1,\ldots , A_\ell$ and $\ell$ distinct vertices
$v_1, \ldots, v_\ell$ such that for each $1 \leq i \leq \ell$, $A_i$
contains $v_i$ and $v_{i+1}$ (where $v_{\ell+1} = v_1$). A
$k$-uniform \emph{minimal cycle} of length $\ell$ is a cyclic list
of $k$-sets $A_1,\ldots , A_\ell$ such that consecutive sets
intersect in at least one element and nonconsecutive sets are
disjoint. Denote the family of all $k$-uniform minimal cycles of
length $\ell$ by $\mathcal{C}_{\ell}^{\left( k \right)}$. A
$k$-uniform \emph{linear cycle} of length $\ell$, denoted by
$\mathbb{C}_{\ell}^{\left( k \right)}$, is a cyclic list of $k$-sets
$A_1,\ldots , A_\ell$ such that consecutive sets intersect in
exactly one element and nonconsecutive sets are disjoint.

The {\it Tur\'an number}, or extremal number, of a $k$-uniform
hypergraph $H$, denoted by $ex_k(n;H)$, is the maximum number of
edges in any $k$-uniform hypergraph $F$ on $n$ vertices which does
not contain $H$ as a subgraph. This is a natural generalization of
the classical Tur\'an number for $2$-uniform graphs; we restrict
ourselves to the case of $k$-uniform hypergraphs. Let
$ex_k(n;F_1,F_2,\ldots,F_r)$ denote the $k$-uniform hypergraph
Tur\'an Number of a list of $k$-uniform hypergraphs
$F_1,F_2,\ldots,F_r$, i.e.,
$ex_k(n;F_1,F_2,\ldots,F_r)=ex_k(n;F_1\cup F_2\cup \ldots\cup F_r)$.

For the class of $k$-uniform Berge paths of length $\ell$, Gy\"{o}ri
et al \cite{GKL} determined $ex_k(n;\mathcal{B}_\ell^{(k)})$ exactly
for infinitely many $n$. In \cite{FJS}, F\"{u}redi et al.
established the following results.

\begin{thm}\label{FJS}\cite{FJS}
Let $k$, $t$ be positive integers, where $k\geq 3$. For sufficiently
large $n$, we have
\[e{x_k}\left( n;\mathbb{P}_{2t+1}^{\left( k \right)} \right) =\binom{n-1}{k-1}+
\binom{n-2}{k-1}+\ldots+\binom{n-t}{k-1}.\] The only extremal family
consists of all the $k$-sets in $[n]$ that meet some fixed set $S$
of $t$ vertices. Also,
\[e{x_k}\left( n;\mathbb{P}_{2t+2}^{\left( k \right)} \right) =\binom{n-1}{k-1}+
\binom{n-2}{k-1}+\ldots+\binom{n-t}{k-1}+\binom{n-t-2}{k-2}.\] The
only extremal family consists of all the $k$-sets in $[n]$ that meet
some fixed set $S$ of $t$ vertices plus all the $k$-sets in
$[n]\setminus S$ that contain some two fixed elements.
\end{thm}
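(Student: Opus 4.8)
The plan is to prove the two displayed formulas together with the characterizations of the extremal families, treating lower and upper bounds separately. For the \emph{lower bound} in the odd case, fix a $t$-set $S\subseteq[n]$ and let $\mathcal{F}$ consist of all $k$-subsets of $[n]$ meeting $S$; telescoping $\binom{m}{k}-\binom{m-1}{k}=\binom{m-1}{k-1}$ gives $|\mathcal{F}|=\binom{n}{k}-\binom{n-t}{k}=\sum_{i=1}^{t}\binom{n-i}{k-1}$. If $\mathcal{F}$ contained a copy of $\mathbb{P}_{2t+1}^{(k)}$ with edges $G_1,\dots,G_{2t+1}$, the $t+1$ odd-indexed edges $G_1,G_3,\dots,G_{2t+1}$ would be pairwise disjoint and each meet $S$, forcing $|S|\ge t+1$. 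For the even case with $t\ge 1$, add all $k$-subsets of $[n]\setminus S$ containing a fixed pair $\{a,b\}\subseteq[n]\setminus S$; these $\binom{n-t-2}{k-2}$ edges are disjoint from the previous ones, giving the claimed total. In a copy of $\mathbb{P}_{2t+2}^{(k)}$ the $t+1$ pairwise disjoint odd-indexed edges cannot all meet the $t$-set $S$, so one of them lies in $[n]\setminus S$ and hence contains $\{a,b\}$; the same holds for the $t+1$ even-indexed edges; but then two edges of the path share the pair $\{a,b\}$, contradicting $|G_i\cap G_j|\le 1$ in a linear path. (For $t=0$ the second construction is the pair-star and excludes $\mathbb{P}_2^{(k)}$ for the same reason.)

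For the \emph{upper bound}, the natural first move is induction on $t$: take $v$ of maximum degree, and use $\sum_{i=1}^{t}\binom{n-i}{k-1}-\binom{n-1}{k-1}=\sum_{i=1}^{t-1}\binom{(n-1)-i}{k-1}$ to see that the trace of $\mathcal{F}$ on $[n]\setminus\{v\}$ still beats the threshold for $\mathbb{P}_{2t-1}^{(k)}$, hence contains such a path, which one tries to extend by one edge at each end, routing at least one of the two new edges through $v$. This succeeds cleanly only when $v$ is incident to nearly all $\binom{n-1}{k-1}$ of the $k$-sets through it, and even then the extension at the far end is awkward: the two end edges of a linear path are disjoint and so cannot both use $v$. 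The main obstacle is therefore the regime in which the degree sequence is spread out at the scale $n^{k-2}$, where deleting a single vertex costs too much and a direct greedy extension is unavailable.

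To get past this one invokes the Frankl--F\"{u}redi Delta-system (sunflower) method. Since $|\mathcal{F}|=\Theta(n^{k-1})$, one passes to a subfamily of comparable size that is \emph{homogeneous}: its sunflower-kernel structure is uniform over all edges, and every kernel is the core of an arbitrarily large sunflower whose petals also lie in the subfamily. In such a family a linear path is built greedily, each new edge found by exploiting a large sunflower based at a suitable vertex of the current end edge and avoiding the bounded part already constructed; the only obstruction to reaching length $2t+1$ is that, after the reduction, at most $t$ vertices host such large sunflowers, i.e.\ there is a transversal of size $\le t$ covering all but $o(n^{k-1})$ edges of $\mathcal{F}$. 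Feeding this approximate structure back into $\mathcal{F}$ and running a stability (progressive-induction) argument on the few exceptional edges then yields the exact bound $\sum_{i=1}^{t}\binom{n-i}{k-1}$ and the uniqueness of the star extremal family. For $\mathbb{P}_{2t+2}^{(k)}$ the same scheme runs with one extra layer of bookkeeping: the homogeneous family may retain, beyond a size-$t$ transversal, a single surviving kernel of size $2$, namely a fixed pair lying in a positive-density set of edges disjoint from the transversal, which accounts for the extra $\binom{n-t-2}{k-2}$ edges and for the second extremal family. Showing that nothing richer than this one extra pair can survive a $\mathbb{P}_{2t+2}^{(k)}$-free hypothesis is the delicate step, and it is where the parity genuinely enters.
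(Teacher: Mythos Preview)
The paper does not contain a proof of this statement: Theorem~\ref{FJS} is quoted from \cite{FJS} (F\"uredi--Jiang--Seiver) and used as a black box, so there is no ``paper's own proof'' to compare your proposal against. Your lower-bound constructions and verifications are correct and standard. Your upper-bound outline---passing via the Frankl--F\"uredi Delta-system method to a homogeneous subfamily, building the linear path greedily from large sunflowers, extracting an approximate $t$-transversal, and then running a stability/cleanup argument to pin down the exact bound and the extremal families---is indeed the strategy of the original F\"uredi--Jiang--Seiver paper, so in that sense you are on the right track; but what you have written is a high-level road map rather than a proof, and the genuinely hard steps (the homogeneity reduction with only $O(1)$ loss in density, the precise stability argument that upgrades the approximate transversal to an exact structural statement, and the parity-sensitive analysis showing that in the even case at most one extra size-$2$ kernel survives) are asserted rather than carried out.
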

For more results we refer to \cite{FJS, MV}.

For the minimal and linear cycles, F\"{u}redi and Jiang \cite{FJ},
determined the extremal numbers when the forbidden hypergraph is a
single minimal cycle or a single linear cycle. This confirms, in a
stronger form, a conjecture of Mubayi and Verstra\"ete \cite{MV} for
$k\geq 5$ and adds to the limited list of hypergraphs whose Tur\'an
numbers have been known either exactly or asymptotically. Their main
results are as follows:

\begin{thm}\label{FJ1}\cite{FJ}
Let $t$ be a positive integer, $k\geq4$. For sufficiently large $n$, we have\\
$e{x_k}\left( n;\mathcal{C}_{2t+1}^{\left( k \right)} \right) = \left( {\begin{array}{*{20}{c}}
n\\
k
\end{array}} \right) - \left( {\begin{array}{*{20}{c}}
{n - t}\\
k
\end{array}} \right)$, and $e{x_k}\left( n;\mathcal{C}_{2t+2}^{\left( k \right)}
\right) = \left( {\begin{array}{*{20}{c}}
n\\
k
\end{array}} \right) - \left( {\begin{array}{*{20}{c}}
{n - t}\\
k
\end{array}} \right)+1$.
For $\mathcal{C}_{2t+1}^{\left( k \right)}$, the only extremal
family consists of all the $k$-sets in $[n]$ that meet some fixed
$k$-set $S$. For $\mathcal{C}_{2t+2}^{\left( k \right)}$, the only
extremal family consists of all the $k$-sets in $[n]$ that intersect
some fixed $t$-set $S$ plus one additional $k$-set outside $S$.
\end{thm}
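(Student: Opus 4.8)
The plan is to get the lower bounds from an explicit construction plus a simple covering argument, and the upper bounds by induction on $t$, each step deleting one vertex and reducing a forbidden minimal cycle of length $2t+1$ (resp.\ $2t+2$) to one of length $2t-1$ (resp.\ $2t$).

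\emph{Lower bound.} Fix a $t$-set $S\subseteq[n]$ and take $\mathcal H_S$ to be the family of all $k$-subsets of $[n]$ meeting $S$, of size $\binom{n}{k}-\binom{n-t}{k}$. The key point is that in a $k$-uniform minimal cycle every vertex lies in at most two edges: nonconsecutive edges are disjoint and the consecutive intersections are pairwise disjoint, so a vertex lying in two edges forces those edges to be consecutive; hence a minimal cycle of length $\ell$ admits no transversal smaller than $\lceil\ell/2\rceil$. Since $S$ is a transversal of $\mathcal H_S$ of size $t<\lceil(2t+1)/2\rceil$, the family $\mathcal H_S$ contains no $\mathcal C_{2t+1}^{(k)}$ (and no $\mathcal C_{2t+2}^{(k)}$). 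For the even bound I would add one edge $B\subseteq[n]\setminus S$: a copy of a minimal $(2t+2)$-cycle uses $B$ at most once, so at least $2t+1$ of its edges meet $S$, producing at least $2t+1$ incidences with $S$ while only $2t$ are available---a contradiction; and two edges off $S$ would be too much, since one can close a minimal $(2t+2)$-cycle through both of them together with $2t$ edges of $\mathcal H_S$ paired off at the $t$ points of $S$, all remaining connectors being fresh vertices.

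\emph{Upper bound.} The base case $t=1$ is, in essence, the classical fact that a $k$-uniform family with no three pairwise intersecting members of empty common intersection has at most $\binom{n-1}{k-1}$ edges, the star being the unique extremum (the $\mathcal C_4^{(k)}$ bound is handled in the same spirit). For the inductive step, let $F$ be $\mathcal C_{2t+1}^{(k)}$-free on $[n]$, $n$ large, and suppose $|E(F)|>\binom{n}{k}-\binom{n-t}{k}$. Since every degree is at most $\binom{n-1}{k-1}$ and $\binom{n}{k}=\binom{n-1}{k}+\binom{n-1}{k-1}$, for every vertex $v$
\[
|E(F-v)|\ \ge\ |E(F)|-\binom{n-1}{k-1}\ >\ \binom{n-1}{k}-\binom{n-t}{k}\ =\ ex_k\!\left(n-1;\mathcal C_{2t-1}^{(k)}\right)
\]
by the induction hypothesis (parameters $n-1$, $t-1$), so $F-v$ contains a minimal $(2t-1)$-cycle $C$. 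I would then lift $C$ to a minimal $(2t+1)$-cycle in $F$: delete one edge $A_i$ of $C$, leaving a minimal path on its other $2t-2$ edges whose end edges are the two neighbours $A_{i-1},A_{i+1}$ of $A_i$; take a linear path $D_1D_2D_3$ of three edges of $F$ meeting $A_{i-1}$ and $A_{i+1}$ at non-connector vertices and otherwise disjoint from $V(C)$; then $A_{i+1},\dots,A_{i-1},D_1,D_2,D_3$ is a minimal $(2t+1)$-cycle in $F$ (the two attachment vertices each lie in a single edge of $C$, so nonconsecutiveness is preserved, and for $t\ge3$ the end edges of the path were already disjoint in $C$), a contradiction; hence $|E(F)|\le\binom{n}{k}-\binom{n-t}{k}$. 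The even case is parallel, lifting a minimal $2t$-cycle to a minimal $(2t+2)$-cycle and using $ex_k(n-1;\mathcal C_{2t}^{(k)})=\binom{n-1}{k}-\binom{n-t}{k}+1$. For uniqueness: if equality holds the same lifting forces some vertex $v$ to lie in every $k$-set through it, so $F-v$ is extremal for $\mathcal C_{2t-1}^{(k)}$ on $n-1$ vertices and, by induction, equals the family of all $k$-sets of $[n]\setminus\{v\}$ meeting a fixed $(t-1)$-set $S'$; thus $F$ is precisely the family of $k$-sets meeting the $t$-set $S'\cup\{v\}$, with the additional edge in the even case.

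\emph{The main obstacle.} The crux is the lifting step, i.e.\ turning the density hypothesis into the existence of the reconnecting linear $3$-path with prescribed non-connector attachment points outside $V(C)$, since a priori the edges of $F$ could cluster away from $V(C)$. One must exploit the freedom to choose the cycle $C$ (a dense $F$ has many), the deleted edge $A_i$, and the $k-2$ non-connector slots on each end edge, after first discarding vertices of small degree. The clean way to organise this---and the one used in \cite{FJ}---is the delta-system (sunflower) method: first pass to a subhypergraph $F'\subseteq F$ with $|E(F')|\ge c_k|E(F)|$ in which all edges share one intersection pattern (a canonical kernel of bounded size, equal-kernel edges forming sunflowers), so that long linear paths and their reroutings are available essentially for free; the resulting count reproduces the stated extremal numbers, and a stability analysis of near-extremal $F'$ yields the uniqueness statements. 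I expect the bookkeeping in that reduction, together with the equality/stability analysis, to be the most delicate part---not the cycle building itself.
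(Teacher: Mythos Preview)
This statement is not proved in the present paper at all: it is Theorem~\ref{FJ1}, quoted from F\"uredi and Jiang~\cite{FJ} and used as a black box (specifically, as the base case $r=1$ of the induction in the proof of Theorem~\ref{th1}). So there is no ``paper's own proof'' to compare your proposal against.

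As for the proposal itself: your lower-bound argument is fine, but the inductive upper bound has a real gap that you yourself flag. After deleting an arbitrary vertex $v$ and locating a minimal $(2t-1)$-cycle $C$ in $F-v$, nothing so far ties $v$ or any high-density part of $F$ to $C$; the edge count alone does not produce the reconnecting linear $3$-path $D_1D_2D_3$ with prescribed attachment points on $A_{i-1}$ and $A_{i+1}$, since all the surplus edges of $F$ could live far from $V(C)$. Saying ``exploit the freedom to choose $C$'' and invoking the delta-system method is pointing in the right direction, but that is precisely the substantive content of~\cite{FJ}, not a step one can wave through. In effect your sketch reduces the theorem to exactly the hard part of the original proof. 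Also, your base case $t=1$ (the $\mathcal C_3^{(k)}$ and $\mathcal C_4^{(k)}$ bounds) is asserted rather than proved; this is itself nontrivial and is handled in~\cite{FJ,MV}.

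If you want a feel for the kind of argument that actually closes the loop, look at how the present paper proves Theorem~\ref{th1}: rather than lifting a shorter cycle, it finds one cycle $C$, counts \emph{terminal} edges (edges meeting $C$ in exactly one vertex), pigeonholes to a common $\lfloor(\ell_1+1)/2\rfloor$-subset $U\subseteq C$ with many shared $(k-1)$-tails, and then assembles the desired cycle from those tails via Theorem~\ref{KMW} and Theorem~\ref{FJS}. The F\"uredi--Jiang proof of Theorem~\ref{FJ1} uses the delta-system machinery you mention; either way, the work is in manufacturing the reconnecting path, not in the induction skeleton.
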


\begin{thm}\label{FJ2}\cite{FJ}
Let $t$ be a positive integer, $k\geq5$. For sufficiently large $n$, we have\\
$e{x_k}\left( n;\mathbb{C}_{2t+1}^{\left( k \right)} \right) = \left( {\begin{array}{*{20}{c}}
n\\
k
\end{array}} \right) - \left( {\begin{array}{*{20}{c}}
{n - t}\\
k
\end{array}} \right)$, and $e{x_k}\left( n,\mathbb{C}_{2t+2}^{\left( k \right)} \right) =
\left( {\begin{array}{*{20}{c}}
n\\
k
\end{array}} \right) - \left( {\begin{array}{*{20}{c}}
{n - t}\\
k
\end{array}} \right)+\left( {\begin{array}{*{20}{c}}
{n - t - 2}\\
{k - 2}
\end{array}} \right)$.
For $\mathbb{C}_{2t+1}^{\left( k \right)}$, the only extremal family
consists of all the $k$-sets in $[n]$ that meet some fixed $k$-set
$S$. For $\mathbb{C}_{2t+2}^{\left( k \right)}$, the only extremal
family consists of all the $k$-sets in $[n]$ that intersect some
fixed $t$-set $S$ plus all the $k$-sets in $[n]\setminus S$ that
contain some two fixed elements.
\end{thm}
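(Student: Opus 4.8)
\noindent The proof has the usual two halves, and I would build the harder (upper-bound) half around the Frankl--Füredi delta-system method together with the linear-path theorem (Theorem~\ref{FJS}).

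\emph{Lower bounds.} I would take $F_0$ to be all $k$-subsets of $[n]$ meeting a fixed $t$-set $S$ for the odd case, and $F_0$ together with all $k$-subsets of $[n]\setminus S$ containing a fixed pair $\{a,b\}\subseteq[n]\setminus S$ for the even case; the edge counts are then exactly $\binom nk-\binom{n-t}k$ and $\binom nk-\binom{n-t}k+\binom{n-t-2}{k-2}$. To check cycle-freeness I would use only the elementary fact that every vertex of a linear cycle lies in at most two of its edges, so any $t$ vertices meet at most $2t$ of the edges: hence $F_0$ has no $\mathbb{C}_{2t+1}^{(k)}$, and for the even construction I would note in addition that two distinct edges both containing $\{a,b\}$ can be neither consecutive (their intersection would have size $\ge 2$) nor non-consecutive (they would be disjoint) on a linear cycle, so at most one edge of a linear cycle lies entirely outside $S$; thus a $\mathbb{C}_{2t+2}^{(k)}$ would still require $\ge 2t+1$ edges meeting $S$, which is impossible.

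\emph{Upper bounds.} The plan is to prove, by induction on $t$, that a $\mathbb{C}_{2t+1}^{(k)}$-free $k$-graph $F$ on $[n]$ with $e(F)\ge\binom nk-\binom{n-t}k$ must have a $t$-set covering every edge, so that $e(F)=\binom nk-\binom{n-t}k$ and $F$ is the full star on that set. The two facts I would exploit are that the target value equals $ex_k(n;\mathbb{P}_{2t+1}^{(k)})$ and that a linear cycle of length $2t+1$ is obtained from a linear path of length $2t$ by adding one ``closing'' edge on its two endpoints through $k-2$ fresh vertices. First I would apply the sunflower/delta-system machinery to pass to a large subfamily with a controlled kernel, producing a bounded vertex set $Z$ that meets all but a negligible fraction of the edges; then a rotation/absorption argument using the abundance of edges (Theorem~\ref{FJS} guarantees a long linear path) would show that any edge avoiding $Z$, together with suitable edges through $Z$, can be assembled into a $\mathbb{C}_{2t+1}^{(k)}$, forcing $Z$ to meet every edge; finally a degree comparison (together with, for the induction step, the observation that deleting a kernel vertex essentially preserves $\mathbb{C}_{2t-1}^{(k)}$-freeness) pins $|Z|$ down to $t$ and $F$ down to the star.

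\emph{The even case and uniqueness.} For $\mathbb{C}_{2t+2}^{(k)}$ I would run the same scheme with the benchmark $\binom nk-\binom{n-t}k+\binom{n-t-2}{k-2}=ex_k(n;\mathbb{P}_{2t+2}^{(k)})$; after extracting the covering $t$-set $S$, a second, parallel analysis of the edges lying wholly inside $[n]\setminus S$ would show they cannot contain even a short linear configuration off $S$ without completing a $\mathbb{C}_{2t+2}^{(k)}$, so they must all share a common pair, and a final local exchange argument identifies this family with a book and gives uniqueness. I expect the main obstacle to be exactly this passage from stability to the exact count and the unique extremal family: one must rule out, essentially edge by edge and uniformly in $n$, every way of trading a missing star edge for edges placed elsewhere without ever creating the forbidden linear cycle, and it is this that makes the delta-system reduction to a bounded kernel, followed by a careful ``cleaning'' step, indispensable; the even case is the more delicate, since there the compensation legitimately adds $\binom{n-t-2}{k-2}$ edges and the analysis must locate precisely that book and nothing larger.
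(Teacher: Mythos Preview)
This statement is Theorem~\ref{FJ2}, which the present paper does \emph{not} prove: it is quoted from F\"uredi and Jiang~\cite{FJ} and used only as the base case $r=1$ in the induction proving Theorem~\ref{th2}. There is therefore no proof in this paper to compare your proposal against.

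For what it is worth, your outline is broadly in the spirit of the actual F\"uredi--Jiang argument: they do use the delta-system method to locate a small ``kernel'' meeting almost all edges, and they do exploit the linear-path Tur\'an numbers (Theorem~\ref{FJS}) as a benchmark. Your lower-bound verifications are correct. On the upper-bound side, however, your sketch leaves the genuinely hard steps as black boxes: the phrases ``rotation/absorption argument'' and ``careful cleaning step'' name the difficulties without resolving them, and in particular you have not indicated how one actually closes a long linear path into a linear cycle using only edges guaranteed by the delta-system structure, nor how the induction on $t$ interacts with the kernel extraction. These are precisely the places where the original proof does substantial work, so while your plan points in the right direction, it is not yet a proof.
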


From the definition of $k$-uniform minimal cycles, two $k$-uniform minimal cycles of
the same length may not be isomorphic. Hence we define the following family of hypergraphs,
where every member consists of $r$ vertex disjoint cycles:
\[\mathcal{S}(\ell_1,\ldots,\ell_r)=\{C_1 \cup \ldots\cup C_r:C_i\in \mathcal{C}_{\ell_i}^{(k)}
~for~i\in [r]\}  \]

Apart from the results above, we will need the following two results:

\begin{thm}\label{KMW}\cite{KMW}
Let $H$ be a $k$-uniform hypergraph
on $n$ vertices with no two edges intersecting in
exactly one vertex, where $k \geq 3$. Then
$|E(H)|\leq \binom{n}{k-2}$.
\end{thm}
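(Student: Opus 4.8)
\noindent\emph{Proof idea.} The plan is to build an injection $\psi\colon E(H)\to\binom{[n]}{k-2}$ with $\psi(A)\subseteq A$ for every edge $A$ (here $\binom{[n]}{k-2}$ denotes the family of $(k-2)$-element subsets of $[n]$); since there are exactly $\binom{n}{k-2}$ such sets, any such map forces $|E(H)|\le\binom{n}{k-2}$. By Hall's theorem, an injective choice of distinct representatives of this type exists iff every subfamily $\mathcal F\subseteq E(H)$ satisfies $|\partial\mathcal F|\ge|\mathcal F|$, where $\partial\mathcal F:=\bigcup_{A\in\mathcal F}\{S\subseteq A:|S|=k-2\}$ is the $(k-2)$-shadow. (One can phrase the same thing over $\mathbb R$: attach to each $A\in E(H)$ the $0/1$ indicator vector $\mathbf v_A\in\mathbb R^{\binom{[n]}{k-2}}$ of its $(k-2)$-subsets and prove the $\mathbf v_A$ are linearly independent; their Gram matrix has diagonal $\binom k2$ and off-diagonal entries $\binom{|A\cap B|}{k-2}\in\{0,1,k-1\}$, the point being that $|A\cap B|\ne1$. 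Both routes come down to the same structural fact.)

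I would first note that the hypothesis is unavoidable: for $\mathcal F=\binom{[n]}{k}$ one has $|\partial\mathcal F|=\binom{n}{k-2}\ll\binom nk$, so the shadow inequality can only be true because no two edges of $H$ meet in exactly one vertex. To use this, form an auxiliary graph $G$ on vertex set $E(H)$ in which $A\sim B$ whenever $|A\cap B|\ge k-2$; by hypothesis every such (distinct) pair has $|A\cap B|\in\{k-2,k-1\}$. If $A,B$ lie in different components of $G$ then no $(k-2)$-set is contained in both, so the shadows of distinct components of $G$ are pairwise disjoint; applying this inside an arbitrary subfamily, Hall's condition for $E(H)$ reduces to the single assertion that $|\partial\mathcal F|\ge|\mathcal F|$ for every $G$-connected family $\mathcal F$.

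Next I would analyse such a $\mathcal F$. If all its edges share a common $(k-2)$-set $D$ (the ``star'' case), write $A=D\cup P_A$ with $P_A$ a pair disjoint from $D$; for $k\ge4$ pick a $(k-4)$-subset $T_0\subseteq D$, and then $A\mapsto T_0\cup P_A$ injects $\mathcal F$ into $\partial\mathcal F$ (the incidence between the edges $D\cup P_A$ and the sets $T_0\cup P_{A'}$ is the identity matrix, so the $\mathbf v_A$ are independent here). If $\mathcal F$ is not a star, I would build it up one edge at a time, each new edge $A'$ sharing a $(k-1)$- or $(k-2)$-face with the union of the earlier ones; any $(k-2)$-subset of $A'$ that is not new must be a subset of $A'\cap A''$ for some earlier edge $A''$, forcing $|A'\cap A''|\ge k-2$ and hence making $A''$ a $G$-neighbour of $A'$, and the $\ne1$ condition constrains the way the $G$-neighbours of $A'$ can lie across $A'$ strongly enough that $A'$ must contribute a fresh $(k-2)$-subset unless the local picture is one of a few bounded ``blob'' configurations --- the extremal one being all $\binom{2k-2}{k}$ $k$-subsets of a fixed $(2k-2)$-set, where $|\partial\mathcal F|=|\mathcal F|$ holds with equality since $\binom{2k-2}{k}=\binom{2k-2}{k-2}$. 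Adding up the contributions gives $|\partial\mathcal F|\ge|\mathcal F|$. For $k=3$ one can instead classify the components of $G$ directly: each is a single edge, a subfamily of the four triples of some $4$-set (the $k=3$ case of the blob, since $2k-2=4$), or a family of triples through a fixed pair, and $|\partial\mathcal F|\ge|\mathcal F|$ is immediate in each case.

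The main obstacle is the non-star analysis for $k=4$. Here the bound is asymptotically tight --- all $k$-sets through a fixed $(k-2)$-set already give $\binom{n-k+2}{2}=(1-o(1))\binom{n}{k-2}$ edges --- so there is essentially no room for slack and the shadow inequality must be established without any lossy step, by pinning down exactly how a $G$-connected family interpolates between a sunflower with a $(k-2)$-element core and a dense blob. The cheap estimates fall short of this: the link of any vertex of $H$ is an intersecting $(k-1)$-uniform family, hence of size at most $\binom{n-2}{k-2}$ by Erd\H{o}s--Ko--Rado, while viewing $H$ as an $L$-intersecting family with $L=\{0,2,3,\dots,k-1\}$, $|L|=k-1$, the Ray-Chaudhuri--Wilson inequality yields only $|E(H)|\le\binom{n}{k-1}$; bringing this down to $\binom{n}{k-2}$ is precisely what the structural argument above has to deliver.
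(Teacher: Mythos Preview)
The paper does not contain a proof of this statement at all: Theorem~\ref{KMW} is quoted from Keevash--Mubayi--Wilson \cite{KMW} and used as a black box inside the proof of Claim~\ref{claim1}. So there is no ``paper's own proof'' to compare your attempt against.

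Evaluating your outline on its own merits: the Hall/shadow reformulation is sound, and the reduction to $G$-connected subfamilies is correct (two edges in different $G$-components share at most $k-3$ vertices, hence no common $(k-2)$-set, so shadows of distinct components are disjoint). The star case for $k\ge4$ is also fine. The genuine gap is the non-star case. There you only \emph{assert} that adding a new edge $A'$ to a $G$-connected family contributes a fresh $(k-2)$-subset ``unless the local picture is one of a few bounded blob configurations,'' without proving it; you then explicitly flag the $k=4$ case as ``the main obstacle'' and note that there is ``essentially no room for slack.'' That is an accurate diagnosis, but it is not a proof. Concretely: an edge $A'$ has $\binom{k}{2}$ many $(k-2)$-subsets, and each $G$-neighbour $A''$ of $A'$ with $|A'\cap A''|=k-1$ already covers $k-1$ of them, so a handful of such neighbours can swallow the entire $(k-2)$-shadow of $A'$; ruling this out requires real work on how the $(k-1)$-faces of $A'$ can be distributed among earlier edges under the $|A\cap B|\ne1$ constraint, and you have not supplied that argument. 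The linear-algebra variant you mention (independence of the shadow indicator vectors $\mathbf v_A$) faces the same issue: the Gram matrix can have off-diagonal entries equal to $k-1$, and you have not shown it is nonsingular.

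In short, you have correctly identified the skeleton and the hard step, but the hard step is left undone, so as written this is a proof plan rather than a proof.
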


We build on earlier work of  F\"{u}redi and Jiang \cite{FJ}, in this paper, we
determine precisely the exact Tur\'an numbers when forbidden hypergraphs are
$r$  vertex disjoint minimal cycles or $r$ vertex disjoint linear cycles.
Our main results are as follows:
\begin{thm}\label{th1}
Let integers $k\geq4$, $r\geq1$, $\ell_1,\ldots,\ell_r\geq3$,  $t =
\sum\limits_{i = 1}^r {\left\lfloor {\frac{{{\ell _i} + 1}}{2}}
\right\rfloor }  - 1$, and $I=1$, if all the $\ell_1,\ldots,\ell_r$
are even, $I=0$ otherwise. For sufficiently large $n$,
\[e{x_k}\left( {n;\mathcal{S}(\ell_1,\ldots,\ell_r)} \right) = \left( {\begin{array}{*{20}{c}}
n\\
k
\end{array}} \right) - \left( {\begin{array}{*{20}{c}}
{n - t}\\
k
\end{array}} \right) + I.\]
\end{thm}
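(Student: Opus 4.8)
The lower bound is immediate: take $S$ a fixed $t$-set and let $F$ consist of all $k$-sets meeting $S$; if moreover all $\ell_i$ are even, add one extra $k$-set disjoint from $S$. To see $F$ contains no member of $\mathcal{S}(\ell_1,\ldots,\ell_r)$, observe that a minimal cycle of length $\ell$ needs at least $\lfloor (\ell+1)/2 \rfloor$ ``private'' vertices not forced to lie in $S$ — more precisely, a counting of how many edges of a minimal $\ell$-cycle can meet a given small set — so $r$ vertex-disjoint minimal cycles of lengths $\ell_1,\ldots,\ell_r$ would require at least $\sum \lfloor (\ell_i+1)/2\rfloor = t+1$ vertices acting as ``cores'', one more than $|S|$; the single extra edge in the even case cannot help complete the parity-sensitive last cycle. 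This mirrors exactly the extremal constructions in Theorems~\ref{FJ1} and \ref{FJ2}, specialized to $r=1$.

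For the upper bound, let $F$ be a $k$-uniform hypergraph on $n$ vertices with $e(F) > \binom{n}{k} - \binom{n-t}{k} + I$, and suppose for contradiction that $F$ contains no member of $\mathcal{S}(\ell_1,\ldots,\ell_r)$. The main idea is induction on $r$. First isolate one cycle: since $e(F)$ far exceeds $ex_k(n;\mathcal{C}_{\ell_r}^{(k)}) = \binom{n}{k} - \binom{n-\lfloor(\ell_r+1)/2\rfloor+1}{k} + I_r$ (Theorem~\ref{FJ1}, where $I_r=1$ iff $\ell_r$ even), $F$ contains a minimal cycle $C_r \in \mathcal{C}_{\ell_r}^{(k)}$, living on a vertex set $W$ of size at most $\ell_r(k-1) = O(1)$. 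Delete $W$: the number of edges destroyed is at most $|W|\binom{n-1}{k-1} = O(n^{k-1})$, which is negligible compared with the gap by which $e(F)$ exceeds the threshold (the relevant binomial differences are $\Theta(n^{k-1})$ but with a strictly larger constant once $n$ is large). The surviving hypergraph $F' = F - W$ on $n - |W|$ vertices then has more than $\binom{n-|W|}{k} - \binom{n-|W|-t'}{k} + I'$ edges, where $t' = t - \lfloor(\ell_r+1)/2\rfloor$ and $I'$ is the indicator that $\ell_1,\ldots,\ell_{r-1}$ are all even; by the inductive hypothesis $F'$ contains a member of $\mathcal{S}(\ell_1,\ldots,\ell_{r-1})$, and together with $C_r$ this produces a member of $\mathcal{S}(\ell_1,\ldots,\ell_r)$ in $F$, a contradiction. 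The base case $r=1$ is precisely Theorem~\ref{FJ1}.

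The delicate point is the bookkeeping with the indicator $I$: when not all $\ell_i$ are even, $I = 0$, and the induction goes through cleanly because each $I_j = 0$; but when all $\ell_i$ are even we must peel off the cycles in an order that keeps the ``$+1$'' accounted for exactly once. The cleanest route is to first handle the all-even case separately: remove cycles $C_r, C_{r-1}, \ldots, C_2$ one at a time using the \emph{minimal-cycle} threshold $\binom{n}{k}-\binom{n-t}{k}$ (the stronger bound $ex_k(n;\mathcal{C}_{2t+1}^{(k)})$-type estimate without the $+1$ suffices at each intermediate stage since we still have many edges to spare), reducing to a hypergraph $F^*$ on $n^* = n - O(1)$ vertices with $e(F^*) > \binom{n^*}{k} - \binom{n^* - \lfloor(\ell_1+1)/2\rfloor + 1}{k} + 1$, and then invoke the even case of Theorem~\ref{FJ1} to find $C_1$. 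One must check that the error terms $O(n^{k-1})$ absorbed at each of the $r-1$ peeling steps still leave $e(F^*)$ strictly above the final threshold; this is where "sufficiently large $n$" is used, and it is the only genuinely computational part. Finally, uniqueness of the extremal family (stated implicitly as ``precisely'') follows by a standard stability argument: any extremal $F$ must, after removing one optimal cycle, be extremal for the smaller problem, and Theorem~\ref{FJ1}'s uniqueness forces the nested structure all $k$-sets meeting a fixed $t$-set.
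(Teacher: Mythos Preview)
Your inductive step contains a genuine gap. You propose to find a minimal $\ell_r$-cycle $C_r$ on a vertex set $W$, delete $W$, and apply the inductive hypothesis to $F'=F-W$. The trouble is that $W$ is too large for this bookkeeping to close. The difference between the $r$-cycle threshold and the $(r-1)$-cycle threshold (on $n-|W|$ vertices) is
\[
\Bigl[\tbinom{n}{k}-\tbinom{n-t}{k}\Bigr]-\Bigl[\tbinom{n-|W|}{k}-\tbinom{n-|W|-t'}{k}\Bigr]
=\frac{\lfloor(\ell_r+1)/2\rfloor}{(k-1)!}\,n^{k-1}+O(n^{k-2}),
\]
whereas the number of edges you may destroy when deleting $W$ is up to $\binom{n}{k}-\binom{n-|W|}{k}\sim\frac{|W|}{(k-1)!}n^{k-1}$. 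Since a minimal $\ell_r$-cycle satisfies $|W|\ge \ell_r>\lfloor(\ell_r+1)/2\rfloor$, the edges destroyed can (and in near-extremal examples will) exceed the available gap; your claim that the loss is ``negligible'' with ``a strictly larger constant'' is simply false. Concretely, for $r=2$, $\ell_1=\ell_2=3$, $k=4$ one has $t=3$, $t'=1$, and if the found $3$-cycle happens to contain the three ``core'' vertices of a near-extremal $F$, then deleting $W$ wipes out essentially all edges and $e(F')$ is nowhere near $\binom{n-|W|-1}{3}$.

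The paper's proof confronts exactly this obstacle. After locating an $\ell_1$-cycle on vertex set $C$, it does \emph{not} delete all of $C$. Instead it analyses the terminal edges (edges meeting $C$ in a single vertex) and, by a pigeonhole argument on $(k-1)$-sets outside $C$, extracts a set $U\subseteq C$ of size exactly $\lfloor(\ell_1+1)/2\rfloor$ together with a large family $R_U$ of $(k-1)$-sets each of which forms an edge with every vertex of $U$. Removing only $U$ --- which now has precisely the right size --- leaves more than $f(n-|U|,k,\{\ell_2,\ldots,\ell_r\})$ edges, so induction yields the remaining $r-1$ cycles on some set $W'$ disjoint from $U$. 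Finally, using Theorem~\ref{KMW} (and Theorem~\ref{FJS} for odd $\ell_1$) applied to the $(k-1)$-uniform system $R_U$, one assembles a \emph{fresh} linear $\ell_1$-cycle through $U$ using $(k-1)$-sets avoiding $W'$. This ``shrink $C$ to a core $U$, induct, then rebuild the first cycle'' manoeuvre is the key idea missing from your plan.
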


\begin{thm}\label{th2}
Let integers $k\geq5$, $r\geq1$, $\ell_1,\ldots,\ell_r\geq3$,  $t=\sum\limits_{i = 1}^r  {\left\lfloor
{\frac{{{\ell_i} + 1}}{2}} \right\rfloor }  - 1$, and $J=\left( {\begin{array}{*{20}{c}}
{n - t - 2}\\
{k - 2}
\end{array}} \right)$, if all the $\ell_1,\ldots,\ell_r$ are even, $J=0$ otherwise.
For sufficiently large $n$,
\[e{x_k}\left( {n;\mathbb{C}_{{\ell_1}}^{\left( k \right)},\ldots, \mathbb{C}_{{\ell_r}}^{\left(
k \right)}} \right) = \left( {\begin{array}{*{20}{c}}
n\\
k
\end{array}} \right) - \left( {\begin{array}{*{20}{c}}
{n - t}\\
k
\end{array}} \right) + J.\]
\end{thm}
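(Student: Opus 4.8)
\emph{Proof plan.} I would prove the matching lower and upper bounds separately, the upper bound by induction on $r$ with base case the single-cycle result, Theorem \ref{FJ2}.

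\emph{Lower bound.} Take the hypergraph $F_0$ on $[n]$ whose edges are all $k$-sets meeting a fixed $t$-set $S$, together with --- in the case that every $\ell_i$ is even --- all $k$-sets inside $[n]\setminus S$ that contain two fixed vertices $a,b\notin S$. The two families are disjoint, so $|E(F_0)|=\binom nk-\binom{n-t}k+J$, and it remains to see $F_0$ contains no $\mathbb{C}_{\ell_1}^{(k)}\cup\cdots\cup\mathbb{C}_{\ell_r}^{(k)}$. The one ingredient is a covering estimate: if every edge of a linear path or cycle $C$ with $m$ edges meets a set $T$, then $|V(C)\cap T|\ge\lceil m/2\rceil$, because each vertex of $C$ lies in at most two consecutive edges of $C$ while the $T$-vertices in $C$ must meet all $m$ of them. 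In $F_0$ every edge off $S$ contains $a$, and two edges of a linear cycle share at most one vertex, so each linear cycle in $F_0$ has at most one edge avoiding $S$, and at most one cycle of a vertex-disjoint family can contain $a$. Hence a vertex-disjoint family $C_i\cong\mathbb{C}_{\ell_i}^{(k)}$ uses at least $\sum_i\lceil\ell_i/2\rceil=t+1$ vertices of $S$ --- all edges meet $S$ if some $\ell_i$ is odd; if all $\ell_i$ are even, one cycle may lose an edge, replacing $\lceil\ell_i/2\rceil$ by $\lceil(\ell_i-1)/2\rceil=\ell_i/2$ --- and these $t+1$ vertices are distinct since the cycles are vertex-disjoint, contradicting $|S|=t$.

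\emph{Upper bound.} Assume $r\ge2$ (the case $r=1$ is Theorem \ref{FJ2}) and suppose $F$ on $[n]$, $n$ large, is $(\mathbb{C}_{\ell_1}^{(k)},\ldots,\mathbb{C}_{\ell_r}^{(k)})$-free with $|E(F)|>\binom nk-\binom{n-t}k+J$; I aim for a contradiction. Call a vertex \emph{heavy} if its degree is at least $\binom{n-1}{k-1}-Kn^{k-2}$ for a small constant $K=K(k)$ chosen so that any two heavy vertices have codegree $\Omega(n^{k-2})$. The first step is a structural lemma: \emph{$F$ has at most $t$ heavy vertices.} If there were $t+1$, I would embed $\mathbb{C}_{\ell_1}^{(k)}\cup\cdots\cup\mathbb{C}_{\ell_r}^{(k)}$ greedily, distributing the heavy vertices so that the $i$-th cycle is allotted $\lceil\ell_i/2\rceil$ of them (possible since $\sum_i\lceil\ell_i/2\rceil=t+1$) to serve as that cycle's link vertices; these can be arranged so that every edge of the cycle contains a heavy vertex and exactly one edge (present iff $\ell_i$ is odd) contains two, which is found using their $\Omega(n^{k-2})$ codegree, while all other private vertices and link vertices are chosen one at a time, each admitting $\Omega(n^{k-1})$ extensions against only $O(1)$ forbidden vertices. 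Granting this, let $S$ be the set of heavy vertices, $p:=|S|\le t$. Edges meeting $S$ number at most $\binom nk-\binom{n-p}k\le\binom nk-\binom{n-t}k$, so $|E(F-S)|>J$, and I would then try to convert excess edges of $F-S$ into a forbidden configuration: when some $\ell_i$ is odd ($J=0$), a single edge $e$ of $F-S$ can be inserted as the one $S$-avoiding edge of an odd cycle $\mathbb{C}_{\ell_i}^{(k)}$, its other $\ell_i-1$ edges routed through $S$ using $\lceil\ell_i/2\rceil-1$ vertices of $S$ and the remaining cycles using $\lceil\ell_j/2\rceil$ vertices each, for a total of $t$ --- available when $p=t$; when all $\ell_i$ are even, a companion argument shows $F-S$ has no two edges meeting in exactly one vertex (such a linear $2$-path, completed through $S$, would again finish it), so that $F-S$ is covered by Theorem \ref{KMW}-type reasoning and, with further work, forced down to the $k$-sets through a fixed pair, giving $|E(F-S)|\le\binom{n-t-2}{k-2}=J$.

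\emph{Main obstacle.} Two points require real work. The easier is making the greedy embeddings rigorous --- bookkeeping of used vertices and the exact pattern in which the $\lceil\ell_i/2\rceil$ chosen link vertices must sit on a short cycle so that every edge is covered. The harder, which I expect to be the crux, is the case $p<t$: the degree dichotomy alone no longer pins down the structure, and one must instead locate a copy of some $\mathbb{C}_{\ell_j}^{(k)}$ in the ``light'' part of $F$, delete its vertex set, and apply the induction hypothesis to the remainder (which is $(\mathbb{C}_{\ell_i}^{(k)}:i\ne j)$-free), the delicate part being to control the edges incident to that copy --- here one extends the delta-system/kernel analysis of F\"uredi and Jiang behind Theorems \ref{FJ1}--\ref{FJ2}. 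The same scheme, with minimal cycles replacing linear cycles and Theorem \ref{FJ1} as the base case, yields Theorem \ref{th1}.
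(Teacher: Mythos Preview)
Your lower-bound construction and its verification coincide with the paper's.

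For the upper bound, the paper takes a different and more direct route that sidesteps your heavy/light dichotomy entirely. It proceeds by induction on $r$ as you do, but after observing that $H$ contains a linear $\ell_1$-cycle on some set $C$ (since $g(n,k,\{\ell_1,\ldots,\ell_r\})>g(n,k,\{\ell_1\})$), it does not try to classify vertices by degree. Either $|E(H\setminus C)|>g(n-|C|,k,\{\ell_2,\ldots,\ell_r\})$ and induction finishes immediately, or the number $m_C$ of edges meeting $C$ is at least $\frac{\lfloor(\ell_1+1)/2\rfloor}{(k-1)!}\,n^{k-1}+O(n^{k-2})$. The key idea is then to extract from $C$ a \emph{terminal set} $U\subset C$ of size exactly $\lfloor(\ell_1+1)/2\rfloor$: counting ``terminal'' edges (those with exactly one vertex in $C$) by the $(k-1)$-set they determine outside $C$, a pigeonhole over $\binom{|C|}{\lfloor(\ell_1+1)/2\rfloor}$ possible subsets yields such a $U$ together with $\Omega(n^{k-1})$ many $(k-1)$-sets $R\subset V(H)\setminus C$ with $R\cup\{u\}\in E(H)$ for every $u\in U$. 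Because $|U|=\lfloor(\ell_1+1)/2\rfloor$ is small, removing $U$ (not all of $C$) keeps $|E(H\setminus U)|>g(n-|U|,k,\{\ell_2,\ldots,\ell_r\})$, so induction produces the remaining $r-1$ linear cycles on some $W$ disjoint from $U$; finally the reservoir of $(k-1)$-sets attached to $U$, via Theorem~\ref{KMW} and (in the odd case) Theorem~\ref{FJS} for $\mathbb{P}_3^{(k-1)}$, lets one rebuild a linear $\ell_1$-cycle through $U$ avoiding $W$.

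This terminal-set extraction is exactly the missing ingredient in your plan. Your acknowledged ``main obstacle'' --- the case $p<t$, where you propose to find a cycle in the light part, delete its full vertex set, and invoke induction --- runs into the very issue the paper's trick resolves: deleting all of $C$ may cost too many edges for the inductive threshold to survive, and no delta-system or kernel analysis is needed to get around this. The paper instead always shrinks $C$ down to the $\lfloor(\ell_1+1)/2\rfloor$-set $U$, which simultaneously (i) is small enough that its removal preserves the inductive bound and (ii) is rich enough in attached $(k-1)$-sets to rebuild the first cycle afterwards. Your heavy-vertex scheme for the $p=t$ case is reasonable, but the paper's argument makes it unnecessary and handles all cases uniformly.
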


Sometimes, we allow the hypergraph to contain less than $r$ minimal
or linear cycles, consider the Tur\'an number in such cases, we have
the following two corollaries. We use notation $r\cdot F$ to denote
$r$ vertex disjoint copies of hypergraph $F$. Let
$\ell_1=\ldots=\ell_r=\ell$, we can immediately get the following
two corollaries from Theorems \ref{th1} and \ref{th2}.
\begin{cor}\label{cor1}
Let integers $k\geq4$, $r\geq1$, $\ell\geq3$,  $t =r\left\lfloor
{\frac{{{\ell} + 1}}{2}} \right\rfloor - 1$, and $I=1$, if $\ell$ is
even, $I=0$, if $\ell$ is odd. For sufficiently large $n$,
\[e{x_k}\left( {n;r\cdot \mathcal{C}_{\ell}^{(k)}} \right) = \left( {\begin{array}{*{20}{c}}
n\\
k
\end{array}} \right) - \left( {\begin{array}{*{20}{c}}
{n - t}\\
k
\end{array}} \right) + I.\]
\end{cor}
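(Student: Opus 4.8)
The plan is to derive Corollary \ref{cor1} directly from Theorem \ref{th1} by specializing $\ell_1=\cdots=\ell_r=\ell$. Under this substitution the family $\mathcal{S}(\ell_1,\ldots,\ell_r)$ becomes exactly the family of hypergraphs consisting of $r$ vertex disjoint minimal cycles each of length $\ell$, which by the notational convention $r\cdot F$ introduced just before the corollary is precisely the family whose members are forbidden when we write $ex_k(n; r\cdot\mathcal{C}_\ell^{(k)})$. (More carefully, $r\cdot\mathcal{C}_\ell^{(k)}$ should be read as the family $\{C_1\cup\cdots\cup C_r : C_i\in\mathcal{C}_\ell^{(k)}\}$, since a single minimal cycle of length $\ell$ is itself only determined up to the family $\mathcal{C}_\ell^{(k)}$; this matches the definition of $\mathcal{S}$.) Thus $ex_k(n; r\cdot\mathcal{C}_\ell^{(k)}) = ex_k(n;\mathcal{S}(\ell,\ldots,\ell))$, and it remains only to check that the parameters in Theorem \ref{th1} collapse to those in the statement of the corollary.

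For the parameter $t$, Theorem \ref{th1} gives $t=\sum_{i=1}^r\lfloor(\ell_i+1)/2\rfloor - 1$; with every $\ell_i=\ell$ this is $r\lfloor(\ell+1)/2\rfloor - 1$, exactly as in Corollary \ref{cor1}. For the indicator $I$, Theorem \ref{th1} sets $I=1$ when all $\ell_i$ are even and $I=0$ otherwise; when $\ell_1=\cdots=\ell_r=\ell$, the condition ``all $\ell_i$ are even'' is equivalent to ``$\ell$ is even,'' so $I=1$ if $\ell$ is even and $I=0$ if $\ell$ is odd, again matching the corollary. Hence for sufficiently large $n$,
\[
ex_k\!\left(n; r\cdot\mathcal{C}_\ell^{(k)}\right) = ex_k\!\left(n;\mathcal{S}(\ell,\ldots,\ell)\right) = \binom{n}{k} - \binom{n-t}{k} + I,
\]
which is the claimed formula.

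There is essentially no obstacle here: the corollary is a pure specialization, and the only point requiring a moment's care is the semantic one noted above — namely that ``$r\cdot\mathcal{C}_\ell^{(k)}$'' must be interpreted as a family of forbidden configurations (one for each way of choosing the $r$ disjoint minimal $\ell$-cycles), rather than as a single fixed hypergraph, since distinct minimal cycles of the same length need not be isomorphic. Once that identification with $\mathcal{S}(\ell,\ldots,\ell)$ is made explicit, the result is immediate from Theorem \ref{th1}, and the hypotheses $k\geq 4$, $r\geq 1$, $\ell\geq 3$ are precisely those inherited from that theorem.
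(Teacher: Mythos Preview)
Your proposal is correct and follows exactly the paper's own approach: the paper explicitly states that Corollary~\ref{cor1} is obtained immediately from Theorem~\ref{th1} by setting $\ell_1=\cdots=\ell_r=\ell$, and you have carried out precisely this specialization, checking that the parameters $t$ and $I$ agree. Your additional remark clarifying that $r\cdot\mathcal{C}_\ell^{(k)}$ must be read as the family $\mathcal{S}(\ell,\ldots,\ell)$ is a helpful point the paper leaves implicit.
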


\begin{cor}\label{cor2}
Let integers $k\geq5$, $r\geq1$, $\ell\geq3$,  $t =r\left\lfloor
{\frac{{{\ell} + 1}}{2}} \right\rfloor - 1$, and $J=\left(
{\begin{array}{*{20}{c}}
{n - t - 2}\\
{k - 2}
\end{array}} \right)$, if $\ell$ is even, $J=0$, if $\ell$ is odd.
For sufficiently large $n$,
\[e{x_k}\left( {n;r\cdot \mathbb{C}_{{\ell}}^{\left( k \right)} }\right) = \left( {\begin{array}{*{20}{c}}
n\\
k
\end{array}} \right) - \left( {\begin{array}{*{20}{c}}
{n - t}\\
k
\end{array}} \right) + J.\]
\end{cor}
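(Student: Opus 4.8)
The plan is to obtain Corollary \ref{cor2} as an immediate specialization of Theorem \ref{th2}, so that the ``proof'' amounts to matching up notation. First, recall that by the definition of the list Tur\'an number one has $ex_k(n;F_1,\ldots,F_r)=ex_k(n;F_1\cup\cdots\cup F_r)$. Taking every $F_i$ to be the (unique up to isomorphism) linear cycle $\mathbb{C}_{\ell}^{(k)}$, the vertex-disjoint union $F_1\cup\cdots\cup F_r$ is precisely $r\cdot\mathbb{C}_{\ell}^{(k)}$, so $ex_k\bigl(n;r\cdot\mathbb{C}_{\ell}^{(k)}\bigr)=ex_k\bigl(n;\mathbb{C}_{\ell}^{(k)},\ldots,\mathbb{C}_{\ell}^{(k)}\bigr)$, the right-hand side being exactly the quantity evaluated by Theorem \ref{th2} in the case $\ell_1=\cdots=\ell_r=\ell$.

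Next I would carry out the parameter bookkeeping. With $\ell_1=\cdots=\ell_r=\ell$, the quantity $t=\sum_{i=1}^r\lfloor(\ell_i+1)/2\rfloor-1$ of Theorem \ref{th2} collapses to $t=r\lfloor(\ell+1)/2\rfloor-1$, which is exactly the $t$ of Corollary \ref{cor2}; the hypothesis ``all of $\ell_1,\ldots,\ell_r$ are even'' becomes simply ``$\ell$ is even'', so the correction term of Theorem \ref{th2} equals $\binom{n-t-2}{k-2}$ when $\ell$ is even and $0$ when $\ell$ is odd, i.e.\ it is precisely the $J$ of Corollary \ref{cor2}. Substituting these into the formula $\binom{n}{k}-\binom{n-t}{k}+J$ of Theorem \ref{th2}, and noting that the hypothesis ``$n$ sufficiently large'' is inherited unchanged (with threshold depending only on $k$, $r$, $\ell$), yields the asserted value of $ex_k\bigl(n;r\cdot\mathbb{C}_{\ell}^{(k)}\bigr)$.

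I do not expect any genuine obstacle here. Unlike Theorems \ref{th1} and \ref{th2}, whose proofs require a detailed structural analysis of the $k$-uniform hypergraphs that avoid a union of $r$ disjoint cycles (building on Theorems \ref{FJ2} and \ref{KMW}), Corollary \ref{cor2} introduces no new content beyond the identification $r\cdot\mathbb{C}_{\ell}^{(k)}=\mathbb{C}_{\ell}^{(k)}\cup\cdots\cup\mathbb{C}_{\ell}^{(k)}$ and the evaluation of $t$ and $J$ above. If one insisted on a self-contained argument, one could replay the proof of Theorem \ref{th2} verbatim with all the $\ell_i$ equal, but this would merely reproduce the general argument, so invoking Theorem \ref{th2} directly is the natural route.
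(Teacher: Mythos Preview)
Your proposal is correct and matches the paper's own approach exactly: the paper states that Corollaries \ref{cor1} and \ref{cor2} follow immediately from Theorems \ref{th1} and \ref{th2} by setting $\ell_1=\cdots=\ell_r=\ell$, with no further argument given. Your parameter bookkeeping for $t$ and $J$ is precisely the specialization the paper intends.
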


\section{Proof of Theorem \ref{th1}}
For convenience, we define $f(n,k,\{\ell_1,\ldots,\ell_r\})=\left(
{\begin{array}{*{20}{c}}
n\\
k
\end{array}} \right) - \left( {\begin{array}{*{20}{c}}
{n - t}\\
k
\end{array}} \right) + I$. Note that the hypergraph on $n$ vertices that has every edge
incident to some fixed $t$-set $S$, along with one additional edge disjoint from $S$
when all of $\ell_1,\ldots,\ell_r$ are  even, has exactly  $f(n,k,\{\ell_1,\ldots,\ell_r\})$
edges and dose not contain a copy of any member of $\mathcal{S}(\ell_1,\ldots,\ell_r)$.

Thus, to prove Theorem \ref{th1}, it suffices to prove that
$e{x_k}\left( {n;\mathcal{S}(\ell_1,\ldots,\ell_r)} \right) \leq \left( {\begin{array}{*{20}{c}}
n\\
k
\end{array}} \right) - \left( {\begin{array}{*{20}{c}}
{n - t}\\
k
\end{array}} \right) + I$, i.e.,
any hypergraph on $n$ vertices with more than $f(n,k,\{\ell_1,\ldots,\ell_r\})$
edges must contain a member of $\mathcal{S}(\ell_1,\ldots,\ell_r)$.
We use induction on $r$.  From Theorem \ref{FJ1}, the case $r=1$ has been proved.
Assume that $r\geq 2$, and Theorem \ref{th1} holds for smaller $r$.

Let $H$ be a hypergraph on $n$ vertices with $m$ edges and $m>f(n,k,\{\ell_1,\ldots,\ell_r\})$.

Since $f(n,k,\{\ell_1,\ldots,\ell_r\})>f(n,k,\ell_1)$ for sufficiently large $n$, there exists
at least one $k$-uniform minimal $\ell_1$-cycle in $H$. Take one of them, denote its vertex set by
$C$, so $\ell_1\leq |C|\leq(k-1)\ell_1$.
We have that $|E(H\setminus C)|\leq f(n-|C|,k,\{\ell_2,\ldots,\ell_r\})$, since otherwise,
by induction hypothesis, we can find vertex disjoint copies of $\mathcal{C}_{\ell_2}^{(k)}\cup \ldots\cup
\mathcal{C}_{\ell_r}^{(k)}$ in $H$; plus the minimal $\ell_1$-cycle on $C$, there is a copy of a
member of $\mathcal{S}(\ell_1,\ldots,\ell_r)$ in $H$ already.

Let $m_C$ denote the number of edges in $H$ incident to vertices in $C$. Then,
\begin{align}
m_C &\geq  m-f(n-|C|,k,\{\ell_2,\ldots,\ell_r\}) \\
\null  &\geq f(n,k,\{\ell_1,\ldots,\ell_r\})-f(n-\ell_1,k,\{\ell_2,\ldots,\ell_r\})\\
\null  &= \frac{{\left\lfloor {\frac{{{\ell_1} + 1}}{2}} \right\rfloor }}{{\left( {k - 1}
 \right)!}}{n^{k - 1}} + O\left( {{n^{k - 2}}} \right)\label{mC}.
\end{align}
We call an edge in $H$ is a \emph{terminal edge} if it contains exactly one vertex in $C$.
Let $T$ denote the set of all terminal edges in $H$. For every $(k-1)$-set $R$ in $V(H)\setminus C$,
define \[T_R=\{E \in T:~R\subseteq E\}.\]
According to the size of each set $T_R$, we partite all the $(k-1)$-sets in $V(H)\setminus C$
into two sets, such that:
\[X=\{R \subseteq V(H)\setminus C~and~|R|=k-1:|T_R|\leq \left\lfloor {\frac{{{\ell_1} + 1}}{2}}
\right\rfloor  - 1\}\]
\[Y=\{R \subseteq V(H)\setminus C~and~|R|=k-1:|T_R|\geq \left\lfloor {\frac{{{\ell_1} + 1}}{2}}
\right\rfloor \}.\]
It is not difficult to give an upper bound of $m_C$ with the terms $|X|$ and $|Y|$ as follows:
\begin{eqnarray*}
m_C &\leq & \left( {\begin{array}{*{20}{c}}
{|C|}\\
2
\end{array}} \right)\left( {\begin{array}{*{20}{c}}
{n-2}\\
{k - 2}
\end{array}} \right) + |X|\left( {\left\lfloor {\frac{{{\ell_1} + 1}}{2}}
\right\rfloor  - 1} \right) + |Y| \cdot |C|\\\
\null  &\leq& \left( {\begin{array}{*{20}{c}}
{|C|}\\
2
\end{array}} \right)\left( {\begin{array}{*{20}{c}}
{n-2}\\
{k - 2}
\end{array}} \right) + \left( {\begin{array}{*{20}{c}}
n\\
{k - 1}
\end{array}} \right)\left( {\left\lfloor {\frac{{{\ell_1} + 1}}{2}}
\right\rfloor  - 1} \right) + |Y| \cdot {\ell_1}\left( {k - 1} \right).
\end{eqnarray*}
Combine with (\ref{mC}), we have that
\begin{equation}\label{eqY}
|Y| \ge \frac{{{n^{k - 1}}}}{{\left( {k - 1} \right){\ell_1}\left( {k - 1} \right)!}}+O(n^{k-2}).
\end{equation}
For any $(k-1)$-set $R\in Y$, there are at least $\left\lfloor {\frac{{{\ell_1} + 1}}{2}} \right\rfloor$
vertices in $C$ that can form terminal edges with $R$. We choose exactly
$\left\lfloor {\frac{{{\ell_1} + 1}}{2}} \right\rfloor$ of them, call the vertex set of these
$\left\lfloor {\frac{{{\ell_1} + 1}}{2}} \right\rfloor$  vertices \emph{terminal set} relative to $R$.
Since the number of  $\left\lfloor {\frac{{{\ell_1} + 1}}{2}} \right\rfloor$-sets in $C$
is at most $\left( {\begin{array}{*{20}{c}}
{|C|}\\
{\left\lfloor {\frac{{{\ell_1} + 1}}{2}} \right\rfloor }
\end{array}} \right)$, we can get that
some elements in $Y$ may have the same terminal set. And it is easy
to derive that, the number of $(k-1)$-sets in $Y$ with the same
terminal set, is at least
\[\frac{{{n^{k - 1}}}}{{\left( {k - 1} \right){\ell _1}\left( {k - 1}
\right)!}}{\binom{|C|}{\left\lfloor {\frac{{{\ell _1} + 1}}{2}}
\right\rfloor}^{{\rm{ - }}1}} + O({n^{k - 2}}) \ge \frac{{{n^{k -
1}}}}{{\left( {k - 1} \right){\ell _1}\left( {k - 1} \right)!}}{
\binom{{\left( {k - 1} \right){\ell _1}}}{{\left\lfloor
{\frac{{{\ell _1} + 1}}{2}} \right\rfloor }}^{{\rm{ - }}1}} +
O({n^{k - 2}}).\]

Choose one terminal set $U$ in $C$, such that there are at least
$\frac{{{n^{k - 1}}}}{{\left( {k - 1} \right){\ell _1}\left( {k - 1} \right)!}}
{ \binom{{\left( {k - 1} \right){\ell _1}}}{{\left\lfloor {\frac{{{\ell _1} + 1}}{2}}
\right\rfloor }} ^{{\rm{ - }}1}} + O({n^{k - 2}})$ $(k-1)$-sets in $V(H)\setminus C$,
every such $(k-1)$-set can form a terminal edge with every vertex in $U$.
Let $R_U$ be the set of all the common $(k-1)$-sets associate with $U$ in $V(H)\setminus C$,
we have that
\begin{equation}\label{RU}
|R_U|\geq \frac{{{n^{k - 1}}}}{{\left( {k - 1} \right){\ell _1}\left( {k - 1} \right)!}}
\binom{{\left( {k - 1} \right){\ell _1}}}{{\left\lfloor {\frac{{{\ell _1} + 1}}{2}}
\right\rfloor }} ^{{\rm{ - }}1} + O({n^{k - 2}}).
\end{equation}

Let $m_U$ denote the number of edges incident to vertices in $U$,
then,
\[m_U\leq \left\lfloor {\frac{{{\ell _1} + 1}}{2}} \right\rfloor
\binom{n-\left\lfloor {\frac{{{\ell _1} + 1}}{2}} \right\rfloor}{k-1}+m',\]
where $m'$ is the number of edges which contain at least two vertices in $U$.
With some calculations, we have that
\begin{eqnarray*}
&\null & f(n,k,\{\ell_1,\ldots,\ell_r\})-f(n-\left\lfloor {\frac{{{\ell _1} + 1}}{2}}
\right\rfloor,k,\{\ell_2,\ldots,\ell_r\})-m_U\\
&=& \binom{n-1}{k-1}+\binom{n-2}{k-1}+\cdots+\binom{n-\left\lfloor {\frac{{{\ell _1}
+ 1}}{2}} \right\rfloor}{k-1}-m_U\\
&\geq& \left[\binom{n-1}{k-1}-\binom{n-\left\lfloor {\frac{{{\ell _1}
+ 1}}{2}} \right\rfloor}{k-1}\right]+\left[\binom{n-2}{k-1}-\binom{n-\left\lfloor {\frac{{{\ell _1}
+ 1}}{2}} \right\rfloor}{k-1}\right]\\
&\null&+\cdots+\left[\binom{n-\left\lfloor {\frac{{{\ell _1}
+ 1}}{2}} \right\rfloor+1}{k-1}-\binom{n-\left\lfloor {\frac{{{\ell _1}
+ 1}}{2}} \right\rfloor}{k-1}\right]-m'.
\end{eqnarray*}
It is not difficult to deduce that the last  expression
is no less than zero (consider the combinatorial meaning of that
expression), hence, we can derive that
\begin{eqnarray*}
E(H\setminus U) &= & m-m_U>f(n,k,\{\ell_1,\ldots,\ell_r\})-m_U\\
\null  &\geq&  f(n-\left\lfloor {\frac{{{\ell_1} + 1}}{2}} \right\rfloor ,k,\{\ell_2,\ldots,\ell_r\}).
\end{eqnarray*}
Thus by the induction hypothesis, there exists a member of
$\mathcal{S}(\ell_2,\ldots,\ell_r)$ with vertex set $W$ in
$V(H)\setminus U$, also we have that
\begin{equation}\label{W}
|W|\leq \left( {k - 1} \right)\sum\limits_{i = 2}^r {{\ell_i}}.
\end{equation}

Now we focus on finding a $k$-uniform minimal $\ell_1$-cycle disjoint from $W$.

Considering the $(k-1)$-uniform hypergraph $H_0$ with vertex set $V(H)\setminus U$ and edge set $R_U$,
we will prove the following claim:
\begin{claim}\label{claim1}
There are ${\left\lfloor {\frac{{{\ell _1}}}{2}} \right\rfloor}$ pairs of $(k-1)$-edges in $H_0$,
say $\{a_i,b_i\}$, $i=1,\ldots,{\left\lfloor {\frac{{{\ell _1}}}{2}} \right\rfloor}$,
such that for every $i$, $a_i$ and $b_i$ have exactly one common vertex, and for any $j\neq i$,
$\{a_i,b_i\}$ and $\{a_j,b_j\}$ are vertex disjoint, moreover, all these $(k-1)$-edges disjoint from $W$.
\end{claim}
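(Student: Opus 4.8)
The plan is to extract the $\left\lfloor \ell_1/2\right\rfloor$ pairs one at a time by a greedy argument whose engine is Theorem \ref{KMW}. Note that $H_0$ is $(k-1)$-uniform with $k-1\ge 3$ (since $k\ge 4$), so by Theorem \ref{KMW} any subhypergraph of $H_0$ with more than $\binom{n}{k-3}$ edges must contain two edges meeting in exactly one vertex. Since $|E(H_0)|=|R_U|$ is of order $n^{k-1}$ by (\ref{RU}), far exceeding $\binom{n}{k-3}$, there is room to repeat such an extraction a bounded number of times.

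Concretely, I would first delete the vertices of $W$ and work in the subhypergraph $H_0'$ of $H_0$ induced on $V(H)\setminus(U\cup W)$. By (\ref{W}), $|W|$ is at most the constant $(k-1)\sum_{i=2}^{r}\ell_i$, and each vertex lies in at most $\binom{n-1}{k-2}$ of the $(k-1)$-sets in $R_U$; hence passing from $H_0$ to $H_0'$ destroys only $O(n^{k-2})$ edges, and $|E(H_0')|$ is still of order $n^{k-1}$. Then I iterate: given disjoint pairs $\{a_1,b_1\},\dots,\{a_j,b_j\}$ with $j<\left\lfloor\ell_1/2\right\rfloor$, each $a_i,b_i$ sharing exactly one vertex, delete from $H_0'$ the (constantly many, namely at most $2(k-1)\left\lfloor\ell_1/2\right\rfloor$) vertices they cover. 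This again removes only $O(n^{k-2})$ edges, so the remaining $(k-1)$-uniform hypergraph still has more than $\binom{n}{k-3}$ edges once $n$ is large; Theorem \ref{KMW} then yields two of its edges meeting in exactly one vertex, which we take as $\{a_{j+1},b_{j+1}\}$. By construction these avoid $W$ and all earlier pairs, so the invariant persists, and after $\left\lfloor\ell_1/2\right\rfloor$ steps we obtain the pairs asserted in Claim \ref{claim1}. It is worth recording that every $(k-1)$-edge produced lies in $R_U$, hence is disjoint from $C$ and forms a terminal edge with each vertex of $U$; this is precisely the feature that will later let us weave the $\left\lfloor\ell_1/2\right\rfloor$ pairs together with the $\left\lfloor(\ell_1+1)/2\right\rfloor$ vertices of $U$ into a minimal $\ell_1$-cycle avoiding $W$.

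The only point requiring care, and it is purely routine estimation, is the bookkeeping of edge losses: one must confirm that the total number of vertices ever removed (namely $|W|$ together with the $2(k-1)\left\lfloor\ell_1/2\right\rfloor$ vertices used by the pairs) is a constant independent of $n$, so that the lower bound of order $n^{k-1}$ from (\ref{RU}) survives all deletions and keeps beating the $O(n^{k-3})$ threshold of Theorem \ref{KMW} for $n$ sufficiently large. I do not anticipate any genuine obstacle beyond this estimate.
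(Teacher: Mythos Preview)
Your proposal is correct and follows essentially the same route as the paper: remove the vertices of $W$ from $H_0$, observe that this (and the later removal of the constantly many vertices of previously selected pairs) costs only $O(n^{k-2})$ edges against the $\Theta(n^{k-1})$ supply from (\ref{RU}), and iteratively apply Theorem~\ref{KMW} to extract each singleton-intersecting pair. Your threshold $\binom{n}{k-3}$ is in fact the correct one for applying Theorem~\ref{KMW} to the $(k-1)$-uniform hypergraph $H_0$; the paper writes $\binom{n-\lfloor(\ell_1+1)/2\rfloor}{k-2}$, which is harmless here since both are $O(n^{k-2})$, but your formulation is cleaner.
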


\begin{proof}
The number of $(k-1)$-edges incident with vertices in $W$ is at most $|W|\cdot \binom{n - 1}{k - 2}$.
With the aid of (\ref{RU}) and (\ref{W}), in $R_U$, the number of $(k-1)$-edges disjoint from $W$ is at least
\[\frac{{{n^{k - 1}}}}{{\left( {k - 1} \right){\ell _1}\left( {k - 1} \right)!}}
\binom{{\left( {k - 1} \right){\ell _1}}}{{\left\lfloor
{\frac{{{\ell _1} + 1}}{2}} \right\rfloor }}^{{\rm{ - }}1} + O({n^{k
- 2}}) -(k-1)\sum\limits_{i = 2}^r {{\ell _i}}\binom{n-1}{k-2}
> \binom{n-\lfloor {\frac{{{\ell_1} + 1}}{2}}\rfloor}{k-2}.\]
By Theorem \ref{KMW}, we can find a pair of $(k-1)$-edges
$\{a_1,b_1\}$ with exactly one common vertex. Let $p=\left\lfloor
{\frac{{{\ell _1}}}{2}} \right\rfloor \left( {2k - 3} \right)$,
since $\frac{{{n^{k - 1}}}}{{\left( {k - 1} \right){\ell _1}\left(
{k - 1} \right)!}}\binom{{\left( {k - 1} \right){\ell
_1}}}{{\left\lfloor {\frac{{{\ell _1} + 1}}{2}} \right\rfloor }}
^{{\rm{ - }}1} + O({n^{k - 2}}) -(k-1)\sum\limits_{i = 2}^r {{\ell
_i}}\binom{n-1}{k-2}-p\binom{n-1}{k-2}
> \binom{n-\lfloor {\frac{{{\ell_1} + 1}}{2}}\rfloor}{k-2}$,
we can repeat the argument above to find $\{a_2,b_2\}$,
$\ldots,\{a_{\left\lfloor {\frac{{{\ell _1}}}{2}}
\right\rfloor},b_{\left\lfloor {\frac{{{\ell _1}}}{2}}
\right\rfloor}\}$ satisfying the properties described in Claim
\ref{claim1}.
\end{proof}

Let $U=\{u_1,\ldots,u_{\left\lfloor {\frac{{{\ell _1} + 1}}{2}} \right\rfloor}\}$.
To form the required minimal $\ell_1$-cycle, we need to consider such two cases:

{\bf Case 1.} $\ell_1$ is even.

Find ${\frac{{{\ell _1}}}{2}}$ pairs of $(k-1)$-edges in $H_0$ as described in Claim \ref{claim1},
still denote them by $\{a_i,b_i\}$, $i=1,\ldots,\frac{{{\ell _1}}}{2}$.
Construct a $k$-uniform minimal $\ell_1$-cycle in $H$ with edges:
\[a_1\cup \{u_1\}, b_1\cup \{u_2\}, a_2\cup \{u_2\},\ldots, b_{{\frac{{{\ell _1}}}{2}}-1}
\cup \{u_{{\frac{{{\ell _1}}}{2}}}\},
a_{{\frac{{{\ell _1}}}{2}}}\cup \{u_{{\frac{{{\ell _1}}}{2}}}\},b_{{\frac{{{\ell _1}}}{2}}}\cup \{u_1\}.\]

{\bf Case 2.} $\ell_1$ is odd.

Find ${\frac{{{\ell _1}-3}}{2}}$ pairs of $(k-1)$-edges in $H_0$ as
described in Claim \ref{claim1}. Similar to the proof of Claim
\ref{claim1}. Let $Q$ be the vertex set of $W$ and all these
${\frac{{{\ell _1}-3}}{2}}$ pairs of $(k-1)$-edges, hence, $|Q|=
{\frac{{{\ell _1-3}}}{2}}(2k-3)+|W|$. By Theorem \ref{FJS},
$e{x_{k-1}}\left( n-\left\lfloor {\frac{{{\ell _1} + 1}}{2}}
\right\rfloor; \mathbb{P}_{3}^{\left(
k-1\right)}\right)=\frac{1}{{\left( {k - 2} \right)!}}{n^{k - 2}} +
O({n^{k - 3}})$, for sufficiently large $n$. In $H_0$, the number of
$(k-1)$-edges disjoint from $Q$ is at least $\frac{{{n^{k -
1}}}}{{\left( {k - 1} \right){\ell _1}\left( {k - 1}
\right)!}}\binom{{\left( {k - 1} \right){\ell _1}}}{{\left\lfloor
{\frac{{{\ell _1} + 1}}{2}} \right\rfloor }} ^{{\rm{ - }}1} +
O({n^{k - 2}}) -|Q|\binom{n-1}{k-2}>\frac{1}{{\left( {k - 2}
\right)!}}{n^{k - 2}} + O({n^{k - 3}})$. That implies in $H_0$, we
can find a $\mathbb{P}_{3}^{\left( k-1 \right)}$ in remaining
$(k-1)$-edges disjoint from $Q$. Let $x,y,z$ be the three
consecutive $(k-1)$-edges in that $\mathbb{P}_{3}^{\left( k-1
\right)}$, then, in $H$, we can form a $k$-uniform minimal
$\ell_1$-cycle with edges:
\[a_1\cup \{u_1\}, b_1\cup \{u_2\}, a_2\cup \{u_2\},\ldots, a_{{\frac{{{\ell _1-3}}}{2}}}
\cup \{u_{{\frac{{{\ell _1-3}}}{2}}}\},\]
\[b_{{\frac{{{\ell _1-3}}}{2}}}\cup \{u_{{\frac{{{\ell _1-1}}}{2}}}\},
x\cup \{u_{{\frac{{{\ell _1-1}}}{2}}}\}, y\cup \{u_{{\frac{{{\ell _1+1}}}{2}}}\},z\cup \{u_1\}.\]
Moreover, it is easy to see that this $k$-uniform minimal $\ell_1$-cycle is not only
minimal, but also linear, whenever $\ell_1$ is even or odd.
Thus, we have constructed $r$ disjoint $k$-uniform minimal cycles.
So the hypergraph  which contains no member of $\mathcal{S}(\ell_1,\ldots,\ell_r)$
can not have more than $f(n,k,\{\ell_1,\ldots,\ell_r\})$ edges.
Thus completes the proof.\qed

\section{Proof of Theorem \ref{th2}}
The argument in the proof of Theorem \ref{th2} is similar to
the proof of Theorem \ref{th1}.

Let $g(n,k,\{\ell_1,\ldots,\ell_r\})=\left( {\begin{array}{*{20}{c}}
n\\
k
\end{array}} \right) - \left( {\begin{array}{*{20}{c}}
{n - t}\\
k
\end{array}} \right) + J.$
Firstly, we point out that the hypergraph on $n$  vertices that has
every edge incident to some fixed $t$-set $S$, along with all the
$k$-edges disjoint from $S$ containing some two fixed elements not
in $S$ when all of $\ell_1,\ldots,\ell_r$ are  even, has exactly
$g(n,k,\{\ell_1,\ldots,\ell_r\})$ edges and dose not contain a copy
of any member of $\mathbb{C}_{{\ell_1}}^{\left( k
\right)}\cup\ldots\cup \mathbb{C}_{{\ell_r}}^{\left( k \right)}$.

Hence it suffices to prove that $e{x_k}\left(
{n;\mathbb{C}_{{\ell_1}}^{\left( k \right)},\ldots,
\mathbb{C}_{{\ell_r}}^{\left( k \right)}} \right)\leq
g(n,k,\{\ell_1,\ldots,\ell_r\})$. We proceed by induction on $r$
again since the case $r=1$ is provided by Theorem \ref{FJ2}. Let $H$
be a hypergraph on $n$ vertices with
$m>g(n,k,\{\ell_1,\ldots,\ell_r\})$  edges. If one of
$\ell_1,\ldots,\ell_r$ is even, rearrange the sequence to make sure
$\ell_1$ is even.

As in the proof of Theorem \ref{th1}, since
$g(n,k,\{\ell_1,\ldots,\ell_r\})>g(n,k,\ell_1)$ for sufficiently
large $n$, there exists at least one $k$-uniform linear
$\ell_1$-cycle in $H$. Take one of them, denote its vertex set by
$C$. Similarly, we have that $|E(H\setminus C)|\leq
g(n-|C|,k,\{\ell_2,\ldots,\ell_r\})$. Still let $m_C$ denote the
number of edges in $H$ incident to vertices in $C$, with some
calculations, we can get that:
\[m_C \geq  \frac{{\left\lfloor {\frac{{{\ell_1} + 1}}{2}}
 \right\rfloor }}{{\left( {k - 1}
 \right)!}}{n^{k - 1}} + O\left( {{n^{k - 2}}} \right).\]
Again we define terminal edges, $T_R$, $X$, $Y$ as before, we can
find the $\left\lfloor {\frac{{{\ell_1} + 1}}{2}} \right\rfloor$-set
$U$, too. Then by induction hypothesis, we can find a copy of
$\mathbb{C}_{{\ell_2}}^{\left( k \right)}\cup\ldots\cup
\mathbb{C}_{{\ell_r}}^{\left( k \right)}$ on vertex set $W$ in
$V(H)\setminus U$.  With the same method used in the proof of
Theorem \ref{th1}, we can select a terminal set of size
$\left\lfloor {\frac{{{\ell_1} + 1}}{2}} \right\rfloor$ in $C$,
then, similarly, we can construct a $k$-uniform linear
$\ell_1$-cycle in $H$ since the $k$-uniform minimal $\ell_1$-cycle
we described in the proof of Theorem \ref{th1} is also linear. And
this $k$-uniform linear $\ell_1$-cycle avoid the vertices in $W$,
hence we know that the hypergraph  which contains no
$\mathbb{C}_{{\ell_1}}^{\left( k \right)}\cup\ldots\cup
\mathbb{C}_{{\ell_r}}^{\left( k \right)}$ can not have more than
$g(n,k,\{\ell_1,\ldots,\ell_r\})$ edges. The proof is thus complete.
\qed


\begin{thebibliography}{1}
\bibitem{Ber}
C. Berge, {\it Hypergraphs}, North-Holland Mathematical Library 45,
North-Holland, Amsterdam, 1989.

\bibitem{FJS}
Z. F\"{u}redi, T. Jiang, R. Seiver,
Exact Solution of the hypergraph Tur\'{a}n problem for $k$-uniform linear
paths, {\it Combinatorica}, to appear.

\bibitem{FJ}
Z. F\"{u}redi, T. Jiang, Hypergraph Tur\'an numbers of linear cycles,
arXiv:1302.2387. [math.CO] 2013.

\bibitem{KMW} P. Keevash, D. Mubayi, R. Wilson,
Set systems with no singleton intersection, {\it SIAM J. Discrete
Math.} {\bf 20} (2006), 1031--1041.

\bibitem{GKL}
E. Gy\"{o}ri, G. Katona, N. Lemons,
Hypergraph extensions of the Erd\"{o}s-Gallai theorem,
{\it Electronic Notes in Disc. Math.} {\bf 36} (2010), 655--662.

\bibitem{MV}
D. Mubayi, J. Verstra\"ete: Minimal paths and cycles in set systems,
{\it European J. Combin.} {\bf 28}(2007), 1681--1693.



\end{thebibliography}
\end{document}